\providecommand{\U}[1]{\protect\rule{.1in}{.1in}}
\theoremstyle{plain}
\newtheorem{proposition}{Proposition}
\newtheorem{lemma}{Lemma}
\begin{document}

\title{Sampling Point Processes on Stable Unbounded Regions and Exact Simulation of Queues}

\author{Jose Blanchet, Jing Dong \\ [12pt]
        Industrial Engineering and Operations Research Department \\
        Columbia University\\
        New York, NY 10027, USA}
        
\date{}
        
\maketitle

\section*{Abstract}
Given a marked renewal point process (assuming that the marks are i.i.d.) we
say that an unbounded region is stable if it contains finitely many points of
the point process with probability one. In this paper we provide algorithms
that allow to sample these finitely many points efficiently. We explain how
exact simulation of the steady-state measure valued state descriptor of the
infinite server queue follows as a simple corollary of our algorithms.
We provide numerical evidence supporting that
our algorithms are not only theoretically sound but also practical. Finally, having simulation optimization in mind,
we also apply our results to gradient estimation of steady-state performance measures.

\section{Introduction} \label{sec:intro}

Let $N=\{  N\left(  t\right)  :t\in\left(  -\infty,\infty\right)  \}
$ be a two sided time stationary renewal point process. We write $\{A_{n}%
:n\in\mathbb{Z}_{0}\}$ for the times at which the process $N$ jumps, where
$\mathbb{Z}_{0}=\mathbb{Z}\backslash\{0\}$ denotes the set of integers
removing zero, and with $A_{1}>0>A_{-1}$. For simplicity we assume that
$A_{n}<A_{n+1}$ for every $n$. Further, we define $X_{n}=A_{n+1}-A_{n}$.

Now let $\{V_{n}:n\in\mathbb{Z}_{0}\}$ be a sequence of independent and
identically distributed (i.i.d.) random variables (r.v.'s) which are
independent of the process $N$. Define $Z_{n}=\left(  A_{n},V_{n}\right)  $
and consider the marked point process $\mathcal{M}=\{Z_{n}:n\in\mathbb{Z}%
_{0}\}$ which forms a subset of $\mathbb{R}^{2}$. We say that a (Borel
measurable) set $\mathcal{B}$ is stable if $\left\vert \mathcal{M}%
\cap\mathcal{B}\right\vert <\infty$ almost surely (where $\left\vert
\mathcal{C}\right\vert $ is used to denote the cardinality of the set
$\mathcal{C}$).

Under natural assumptions on the inter-arrival times underlying $N$ and on the
distribution of the $V_{n}$'s (stated in Section~\ref{sec:alg}) we propose and study a
class of algorithms that allow to sample exactly (i.e. without any bias) a
realization of the set $\mathcal{M}\cap\mathcal{B}$ for a large class of
unbounded, stable sets $\mathcal{B}$.

Our approach builds on algorithms that are fully developed and studied in
\cite{BlanDon:2012}. As an application of the class of algorithms that we
study here, we provide a procedure that allows to sample from the steady-state
measure valued descriptor of an infinite server queue without any bias (i.e.exact simulation).
Such a procedure, for instance, is obtained by considering the particular case in which
$\mathcal{B}$ takes the form $\mathcal{B}=\{\left(  t,v\right)  :v>\left\vert
t\right\vert ,t\leq0\}$. Given that point processes constitute a natural way
of constructing queueing models in great generality, we believe that the class
of algorithms that we propose here have the potential to be applicable to the
design of exact sampling algorithms of more general queueing models. This is a
research avenue that we plan to investigate in the future.

We argue empirically that it is cheaper to run our exact sampling procedure to fully delete the
initial bias than it is to do a burn-in period that reduces the bias to a
reasonable size, say $5\%$, when talking about, for instance, the steady-state
queue length.

Finally, we apply our exact sampling algorithms for infinite server queues to
perform steady-state sensitivity analysis. For instance, we consider
quantities such as the derivative of the steady-state average
remaining service time with respect to the arrival rate or service rate. These
quantities are of great interests in stochastic optimization via simulation.

So, in summary, our contributions are as follows:

i) We provide the first exact sampling algorithm for stationary marked renewal
processes on unbounded and stable sets, see Section~\ref{sec:alg}.

ii) As a corollary of i) we explain how to obtain an exact sampling algorithm
for the steady-state measure valued descriptor of the infinite server queue.
We also show empirically that this algorithm is
\textit{practical} in the sense of being both easy to code and fast to run, see Section~\ref{sec:inf}.

iii) Finally, we provide new procedures for the sensitivity analysis of
steady-state performance measures of the infinite server queue, see Section~\ref{sec:sens}.

\subsection*{Relevant literature}

Following the seminal work by \cite{PropWil:1996}, several exact sampling
algorithms have been developed, particularly for spatial point processes.
\cite{Ken:1998} and \cite{KenMol:2000} developed algorithms and analytical
tools based on so-called Dominated Coupling From the Past (DCFP). DCFP is
based on the idea of introducing a stationary dominating process that is
simulatable. Compared to our method, firstly they use spatial birth and death
processes (generally of poisson type) as the coupled dominating processes.
This would limit the target distribution to be absolutely continuous with
respect to the Poisson measure. Secondly the number of steps simulated in the
naive DCFP grows exponentially with the system scale (i.e. arrival rate in the infinite server queue setting);
see Proposition 1 in \cite{BertMol:2002} for a detailed proof.
Although several modifications have been proposed, still the
number of steps involved in these backward construction appears to be
significantly large, especially when sampling in infinite volume regions \cite{FerFer:2002};
see Section 7 in  \cite{BertMol:2002} for empirical comparisons.

Our method is based on a construction that is being used in \cite{BlanSig:2011}  and
\cite{BlanDon:2012}; see also \cite{EnsGlyn:2000} for related ideas. The method involves the technique of simulating the
maximum of a negative drift random walk and the last passage time of
independent and identically distributed random variables to an increasing boundary. As shown in \cite{BlanDon:2012} the
complexity of our algorithm scales graciously as the system scale grows.

\section{Sampling form stable unbounded regions} \label{sec:alg}

We start by discussing the assumptions behind our development.\\

\noindent{\bf Assumptions:}

{\bf A1)} Assume that $E\left\vert V_{n}\right\vert ^{1/\alpha}<\infty$ for some
$\alpha> 0$, we also write $F\left(  \cdot\right)  =P\left(  V_{n}\leq
\cdot\right)  $ for the cumulative distribution function (CDF) of $V_{n}$ and
put $\overline{F}\left(  \cdot\right)  =1-F\left(  \cdot\right)  $ for the
tail CDF.

{\bf A2)} We assume that $F\left(  \cdot\right)  $ is known and easily accessible
either in closed form or via efficient numerical procedures. Moreover, we can
simulate $V_{n}$ conditional on $V_{n}\in [a,b]$ with $P\left(  V_{n}
\in [a,b] \right)  >0$. Finally we can find $u(k)$ such that $u(k) \geq \int_{k}^{\infty} P(|V_1|^{1/\alpha}>\nu)d\nu$ and $u(k) \rightarrow 0$ as $k\rightarrow \infty$.

{\bf A3)} Recall that $X_{n}=A_{n+1}-A_{n}>0$. Define $\psi\left(  \theta\right)
=\log E\exp\left(  \theta X_{n}\right)  $ and assume that there exists
$\delta>0$ such that $\psi\left(  \delta\right)  <\infty$. Finally, let us
write $\mu=EX_{n}$.

{\bf A4)} Define $G\left(  \cdot\right)  =P\left(  X_{n}\leq\cdot\right)  $ and
$\overline{G}\left(  \cdot\right)  =1-G\left(  \cdot\right)  $. Suppose that
$G\left(  \cdot\right)  $ is known and that it is possible to simulate from
$G_{eq}\left(  \cdot\right)  :=\mu^{-1}\int_{\cdot}^{\infty}\overline
{G}\left(  t\right)  dt$. Moreover, let $G_{\theta}\left(  \cdot\right)
=E\exp\left(  \theta X_{n}-\psi\left(  \theta\right)  \right)  I\left(
X_{n}\leq\cdot\right)  $ be the associated exponentially tilted distribution
with parameter $\theta$ for $\psi\left(  \theta\right)  <\infty$. We assume
that we can simulate from $G_{\theta}\left(  \cdot\right)  $.

\bigskip

Consider the class of sets $\mathcal{B}\subset \mathbb{R}^{2}$ that are Borel
measurable and such that
\[
\mathcal{B}\subset\mathcal{C}_{\alpha}=\{(t,v):\left\vert v\right\vert
\geq|t|^{\alpha}\}.
\]

Our goal in this section is to develop an algorithm that allows to sample
without bias the random set $\mathcal{M}\cap\mathcal{C}_{\alpha}$, and
therefore $\mathcal{M}\cap\mathcal{B}$. We will discuss extensions that follow
immediately from our formulation at the end of this section. Figure~\ref{fig: rua} illustrates the different shapes that the set $\mathcal{C}_{\alpha}$
can take depending on the values of $\alpha>0$.

\begin{figure}[htb]
{
\centering
\includegraphics[width=0.50\textwidth]{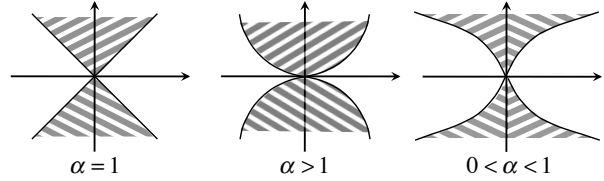}
\caption{The area of $\mathcal{C}_{\alpha}$. The horizontal axis corresponds to the $t$ coordinate while the vertical axis represents the $v$ coordinate \label{fig: rua}}
}
\end{figure}

We now proceed to explain our construction. As the stationary renewal point process is time reversible, starting at $0$
the distribution of the forward process $\{Z_{n}: n > 0\}$ and the backward
process $\{Z_{n}: n<0\}$ are the same. In what follows we limit our discussion
to the construction of the forward process and the simulation of the backward
process is completely analogous.

We follow the idea in \cite{BlanDon:2012}. Let $\epsilon\in(0, \mu)$.
Consider any random time $\kappa$, finite with probability one but large
enough such that

\begin{center}
$A_{n+1}\geq n(\mu-\epsilon)$ and $\left\vert V_{n+1}\right\vert \leq
(n(\mu-\epsilon))^{\alpha}$
\end{center}
for all $n \geq\kappa$.

If such random time $\kappa$ is well defined, we
only need to simulate the stationary process up to $\kappa$ to get a sample
from the unbounded region.\\

\begin{proposition}
The random time $\kappa$ defined above exists and it is finite with
probability one.
\end{proposition}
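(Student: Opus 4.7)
The plan is to show the two defining inequalities each fail only for finitely many indices $n$, so that we can take $\kappa$ to be one more than the last index at which either fails.

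For the first inequality $A_{n+1}\geq n(\mu-\epsilon)$, I would apply the strong law of large numbers. By assumption A3 we have $\psi(\delta)<\infty$, which implies $EX_n=\mu<\infty$ (in fact all moments are finite). Since $A_{n+1}=\sum_{k=1}^{n} X_k + A_1$ (with a harmless constant), the SLLN gives $A_{n+1}/n \to \mu$ almost surely, hence eventually $A_{n+1}/n\geq \mu-\epsilon$. So the set of $n$ for which the first inequality fails is almost surely finite; call its maximum $\kappa_1$ (setting $\kappa_1=0$ if empty).

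For the second inequality $|V_{n+1}|\leq (n(\mu-\epsilon))^{\alpha}$, equivalently $|V_{n+1}|^{1/\alpha}\leq n(\mu-\epsilon)$, I would invoke Borel--Cantelli using assumption A1. Since the $V_n$'s are i.i.d.\ and $E|V_1|^{1/\alpha}<\infty$, the standard tail--sum bound gives
\[
\sum_{n=1}^{\infty} P\!\left(|V_{n+1}|^{1/\alpha} > n(\mu-\epsilon)\right)
= \sum_{n=1}^{\infty} P\!\left(|V_1|^{1/\alpha} > n(\mu-\epsilon)\right)
\leq \frac{E|V_1|^{1/\alpha}}{\mu-\epsilon} <\infty.
\]
By the first Borel--Cantelli lemma, only finitely many such events occur almost surely; let $\kappa_2$ denote the largest such index (again $0$ if none).

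Finally, setting $\kappa=\max(\kappa_1,\kappa_2)+1$ gives a random time that is almost surely finite and, by construction, satisfies both inequalities for every $n\geq \kappa$. The only mild obstacle is coordinating the two conditions, but since each holds eventually with probability one, their intersection does as well, so $\kappa<\infty$ a.s.
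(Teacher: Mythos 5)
Your proof is correct. The treatment of the service-time condition (tail-sum bound plus the first Borel--Cantelli lemma under $E|V_1|^{1/\alpha}<\infty$) is essentially identical to the paper's. Where you differ is in the arrival-time condition: the paper bounds $P(A_{n+1}<n(\mu-\epsilon))$ by a Chernoff/exponential-Chebyshev argument, obtains a summable geometric bound $\exp(-nI(-\epsilon))$, and then applies Borel--Cantelli, whereas you invoke the strong law of large numbers directly to get $A_{n+1}/n\to\mu$ a.s. Your route is more elementary and in fact needs only $EX_n<\infty$ rather than the exponential moment condition A3, so it establishes finiteness of $\kappa$ under weaker hypotheses. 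What the paper's argument buys in exchange is a quantitative, exponentially decaying bound on the probability that the condition fails at index $n$; this is of a piece with the exponential-tilting machinery the paper uses later to actually \emph{simulate} $\kappa(A)$ (and is the kind of estimate one wants for complexity analysis), though none of that is needed for the bare finiteness claim. Your final step of taking $\kappa=\max(\kappa_1,\kappa_2)+1$ is the same coordination the paper performs via $\kappa=\max\{\kappa(A),\kappa(V)\}$, and is unproblematic since the intersection of two almost sure events is almost sure.
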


\begin{proof}
By Chebyshev's inequality,
$$P(A_{n+1} < n(\mu-\epsilon)) \leq E[\exp(\theta(n(\mu-\epsilon)-A_{n+1}))) \leq \exp(-n(-\theta(\mu-\epsilon)-\psi(-\theta)))$$
for any $\theta \geq 0$.\\
Let
$$I(-\epsilon)=\max_{\theta \geq 0}\{-\theta(\mu-\epsilon)-\psi(-\theta)\}$$
As $\psi(0)=0$, $\psi'(0)=\mu$ and $\psi''(0)=Var(X)>0$, $I(-\epsilon)>0$. Then
$$P(A_{n+1}<n(\mu-\epsilon))\leq\exp(-nI(-\epsilon))$$
and
$$\sum_{n=1}^{\infty} P(A_{n+1} < n(\mu-\epsilon)) \leq \frac{\exp(-I(-\epsilon))}{1-\exp(-I(-\epsilon))} < \infty$$
By Borel-Cantelli lemma, $\{A_{n+1} \geq n(\mu-\epsilon)\}$ eventually almost surely.\\
Similarly and independently we have
\begin{eqnarray*}
\sum_{n=1}^{\infty} P(\left\vert V_{n+1} \right\vert>(n(\mu-\epsilon))^{\alpha}) = \sum_{n=1}^{\infty} P(\left\vert V_1 \right\vert^{1/\alpha}>n(\mu-\epsilon))
\leq \frac{1}{\mu-\epsilon}\int_{0}^{\infty} P(\left\vert V_1 \right\vert^{1/\alpha}>\nu)d\nu < \infty
\end{eqnarray*}
Thus, again by Borel-Cantelli lemma, $\{\left\vert V_{n+1} \right\vert \leq (n(\mu-\epsilon))^{\alpha}\}$ eventually almost surely. Therefore,
$P(\kappa<\infty)=1$
\end{proof}

As $\{A_{n}:n\geq1\}$ and $\{V_{n}:n\geq1\}$ are
independent of each other, we consider the following construction. Let
$\kappa(A)$ be a random time satisfying that $A_{n+1}\geq n(\mu-\epsilon)$ for
$n\geq\kappa(A)$, and $\kappa(V)$ be a random time satisfying that
$V_{n+1}\leq n(\mu-\epsilon)$ for $n\geq\kappa(V)$. Clearly $\kappa\left(
A\right)  $ and $\kappa\left(  V\right)  $ are \textit{not }stopping times and
this makes the simulation of these times challenging. However, we will explain
how to sample these times and then we can set $\kappa=\max\{\kappa
(A),\kappa(V)\}$. Our construction will allow us to simulate
$\{A_{n}:n\geq1\}$ and $\{V_{n}:n\geq1\}$ separately.

\subsection{Simulation of $\{ A_{k}: 1\leq k\leq\max\{n,\kappa\left(A\right)\}+1\}  $}

In this subsection we will introduce a method to simulate $\kappa(A)$ together
with $\{A_{k}: k \geq 1\}$.

First, define $A_{1}$ according to the distribution $G_{eq}\left(
\cdot\right)  $. Sampling $A_1$ can be done according to A4).

Now, observe that $A_{n+1}=A_{1}+X_{1}+...+X_{n}$ and define%
\[
\tilde{S}_{n}=n(\mu-\epsilon)-\left(  A_{n+1}-A_{1}\right)  =\sum_{i=1}%
^{n}Y_{i},
\]
where $Y_{i}=(\mu-\epsilon)-X_{i}$. Note that the $Y_{i}$'s are i.i.d. with
$EY_{i}=-\epsilon$. If we set $\tilde{S}_{0}=0$, then $\{\tilde{S}_{n}:n\geq0\}$
is a random walk with negative drift. We are interested in sampling up to the
\textit{last time} $n$ at which $\tilde{S}_{n} > 0$.

We define the following sequence of random times:
\[
\Delta_{1}=0,\text{ }\Gamma_{1}=\inf\{n\geq\Delta_{1}:\tilde{S}_{n}-\tilde
{S}_{\Delta_{1}}>0\},
\]
and for $j\geq2$
\begin{align*}
\Delta_{j} &=\inf\{n\geq\Gamma_{j-1}\mathbf{1}\{\Gamma_{j-1}<\infty
\}\vee\Delta_{j-1}:\tilde{S}_{n}\leq0\},\\
\Gamma_{j} &=\inf\{n\geq\Delta_{j}:\tilde{S}_{n}-\tilde{S}_{\Delta_{j}%
}>0\}.
\end{align*}

Now, let $\gamma=\inf\{j\geq1:\Gamma_{j}=\infty\}$ and note that
$\Delta_{\gamma+1}=\Delta_{\gamma}$ and that $\tilde{S}_{n}\leq0$ for
$n\geq\Delta_{\gamma}$, which in particular implies that $A_{n+1}\geq
n(\mu-\epsilon)$ for $n\geq\Delta_{\gamma}$. Therefore, we have that
$\Delta_{\gamma}=\kappa\left(  A\right)  $.

In what follows we will explain how to simulate the $\Delta_{j}$'s and
$\Gamma_{j}$'s sequentially and jointly with the underlying random walk until
time $\Delta_{\gamma}$. One important observation is that for every $j \geq 1$, $\Delta_{j}<\infty$
almost surely by the strong law of large numbers.

Let us write $\mathcal{F}_{n}=\sigma\{Y_{1},Y_{2},...,Y_{n}\}$ for the
$\sigma$-field generated by the $Y_{j}$'s up to time $n$. Let $\xi\geq0$ and
define
\[
T_{\xi}:=\inf\{n\geq0:\tilde{S}_{n}>\xi\},
\]
then by the strong Markov property we have that for $j\leq\gamma$,
\[
P(\Gamma_{j}=\infty|\mathcal{F}_{\Delta_{j}})=P(\Gamma_{j}=\infty|\tilde
{S}_{\Delta_{j}})=P(T_{0}=\infty)>0,
\]
where we use $P\left(  \cdot\right)  $ to denote the nominal probability measure under
which $\tilde{S}_{0}=0$.

It is important then to note that
\[
P\left(  \gamma=k\right)  =P\left(  T_{0}<\infty\right)  ^{k-1}P\left(
T_{0}=\infty\right)
\]
for $k\geq1$. In other words, $\gamma$ is geometrically distributed. The
procedure that we have in mind is to simulate $\Delta_{\gamma}$ in time
intervals, and the number of time intervals is precisely $\gamma$.

Let $\psi_{Y}(\theta)=\log E\exp(\theta Y_{i})$. As the moment generating
function of $X_{i}$ is finite in a neighborhood of the zero, $\psi_{Y}(\cdot)$
is also finite in a neighborhood of zero and $EY_{i}=\psi_{Y}^{\prime
}(0)=-\epsilon$, Var$(Y_{i})=\psi_{Y}^{\prime\prime}(0)>0$. Then by the
convexity of $\psi_{Y}(\cdot)$, one can always select $\epsilon>0$
sufficiently small so that there exists $\eta>0$ with $\psi_{Y}(\eta)=0$ and
$\psi_{Y}^{\prime}(\eta)>0$. The root $\eta$ allows us to define a new measure
$P_{\eta}$ based on exponential tilting so that%
\[
\frac{dP_{\eta}}{dP}\left(  Y_{i}\right)  =\exp(\eta Y_{i}).
\]
Moreover, under $P_{\eta}$, $\tilde{S}_{n}$ is random walk with positive drift
equal to $\psi_{Y}^{\prime}\left(  \eta\right)$ (\cite{Asm:2003} P. 365). Therefore $P_{\eta}%
(T_{0}<\infty)=1$ and $P(T_{0}<\infty)=E_{\eta}(\exp(-\eta\tilde{S}_{T_{0}}%
))$. More generally, $P_{\eta}(T_{\xi}<\infty)=1$ and
\[
q\left(  \xi\right)  :=P(T_{\xi}<\infty)=E_{\eta}(\exp(-\eta\tilde{S}_{T_{\xi
}}))
\]
for each $\xi\geq0$. Based on the above analysis we now introduce a convenient
representation to simulate a Bernoulli random variable $J\left(  \xi\right)  $
with parameter $q\left(  \xi\right)  $ namely,
\begin{equation}
J\left(  \xi\right)  =I(U\leq\exp(-\eta\tilde{S}_{T_{\xi}})). \label{ConRep}%
\end{equation}
where $U$ is a uniform random variable independent of everything else under
$P_{\eta}$.

Identity (\ref{ConRep}) provides the basis for an implementable algorithm to
simulate a Bernoulli with success probability $q(\xi)$. Sampling $\{\tilde
{S}_{1},...,\tilde{S}_{T_{0}}\}$ conditional on $T_{0}<\infty$, as we shall
explain now, corresponds to basically the same procedure. First, let us write $P^{\ast}(\cdot
)=P(\cdot|T_{0}<\infty)$. The following result provides an expression for the
likelihood ratio between $P^{\ast}$ and $P_{\eta}$.

\begin{lemma}
We have that
\[
\frac{dP^{\ast}}{dP_{\eta}}(\tilde{S}_{1},...,\tilde{S}_{T_{0}})=\frac
{\exp(-\eta\tilde{S}_{T_{0}})}{P(T_{0}<\infty)}\leq\frac{1}{P(T_{0}<\infty)}.%
\]

\end{lemma}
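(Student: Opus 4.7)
The plan is to derive the claimed identity via the standard exponential change of measure together with a stopping-time argument, and then to obtain the bound by observing the sign of $\tilde{S}_{T_{0}}$ at the hitting time.

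First I would record the measure change at a deterministic time. Because $\psi_{Y}(\eta)=0$, the exponential tilting identity
$$\frac{dP_{\eta}}{dP}\Big|_{\mathcal{F}_{n}}=\exp\!\bigl(\eta\tilde{S}_{n}-n\psi_{Y}(\eta)\bigr)=\exp(\eta\tilde{S}_{n})$$
makes $M_{n}:=\exp(\eta\tilde{S}_{n})$ a positive $P$-martingale with mean one, and equivalently $dP/dP_{\eta}|_{\mathcal{F}_{n}}=\exp(-\eta\tilde{S}_{n})$. Next I would extend this from the fixed time $n$ to the random time $T_{0}$. Under $P_{\eta}$ the random walk has strictly positive drift $\psi_{Y}'(\eta)>0$, so $T_{0}<\infty$ $P_{\eta}$-almost surely, and optional stopping (applied to the bounded stopping time $T_{0}\wedge n$ and then taking $n\to\infty$ with the dominated convergence theorem, using $M_{T_{0}\wedge n}\leq 1$ since $\tilde{S}_{k}\leq 0$ for $k<T_{0}$) gives, for every bounded $\mathcal{F}_{T_{0}}$-measurable $\Phi$,
$$E_{\eta}\bigl[\Phi(\tilde{S}_{1},\dots,\tilde{S}_{T_{0}})\exp(-\eta\tilde{S}_{T_{0}})\bigr]=E\bigl[\Phi(\tilde{S}_{1},\dots,\tilde{S}_{T_{0}})I(T_{0}<\infty)\bigr].$$

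Then I would translate this into the claimed likelihood ratio with respect to $P^{\ast}$. Since $P^{\ast}(\cdot)=P(\cdot\mid T_{0}<\infty)$, we have $dP^{\ast}/dP=I(T_{0}<\infty)/P(T_{0}<\infty)$ on $\mathcal{F}_{T_{0}}\cap\{T_{0}<\infty\}$, so for any bounded test function $\Phi$,
$$E^{\ast}\bigl[\Phi(\tilde{S}_{1},\dots,\tilde{S}_{T_{0}})\bigr]=\frac{1}{P(T_{0}<\infty)}E_{\eta}\bigl[\Phi(\tilde{S}_{1},\dots,\tilde{S}_{T_{0}})\exp(-\eta\tilde{S}_{T_{0}})\bigr],$$
which is precisely the Radon--Nikodym identity in the statement.

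Finally, the inequality is immediate: by the definition $T_{0}=\inf\{n\geq 0:\tilde{S}_{n}>0\}$ together with $\tilde{S}_{0}=0$, one has $T_{0}\geq 1$ and $\tilde{S}_{T_{0}}>0$ on $\{T_{0}<\infty\}$; with $\eta>0$ this yields $\exp(-\eta\tilde{S}_{T_{0}})\leq 1$ and hence the bound $1/P(T_{0}<\infty)$.

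The only genuinely delicate point is the passage from fixed $n$ to the random time $T_{0}$, i.e.\ justifying optional stopping for a possibly unbounded stopping time; I expect this to be the main obstacle, but it is handled cleanly by the uniform bound $M_{T_{0}\wedge n}\leq 1$ prior to first upcrossing, combined with $P_{\eta}(T_{0}<\infty)=1$ from the positive drift under the tilted measure.
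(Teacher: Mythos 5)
Your argument is correct and follows essentially the same route as the paper: the paper's two-line proof is just the definition of conditional probability followed by the change-of-measure (Wald likelihood-ratio) identity at the stopping time $T_{0}$, which is exactly what you establish, only with the optional-stopping justification spelled out. One small slip: $M_{T_{0}\wedge n}\leq 1$ fails once $n\geq T_{0}$ because $\tilde{S}_{T_{0}}>0$; the domination you need is either $\exp(-\eta\tilde{S}_{T_{0}})\leq 1$ on $\{T_{0}<\infty\}$ when passing to the limit under $P_{\eta}$, or the uniform bound $M_{T_{0}\wedge n}\leq\exp(\eta(\mu-\epsilon))$ coming from $Y_{i}\leq\mu-\epsilon$, either of which closes the gap.
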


\begin{proof}
\begin{eqnarray*}
P(\tilde S_1 \in H_1, ... , \tilde S_{T_0} \in H_{T_0} | T_0 < \infty) &=& \frac{P(\tilde S_1 \in H_1, ... , \tilde S_{T_0} \in H_{T_0}, T_0 < \infty)}{P(T_0 < \infty)}\\
&=&\frac{E_{\eta}[\exp(-\eta \tilde S_{T_0})I(\tilde S_0 \in H_0, ... , \tilde S_{T_0} \in H_{T_0})]}{P(T_0 < \infty)}.
\end{eqnarray*}
\end{proof}

The previous lemma provides the basis for a simple acceptance / rejection
procedure to simulate $\{\tilde{S}_{1},...,\tilde{S}_{T_{0}}\}$ conditional on
$T_{0}<\infty$. More precisely, we propose $(\tilde{S}_{1},...,\tilde
{S}_{T_{0}})$ from $P_{\eta}\left(  \cdot\right)  $. Then one generates a
uniform random variable $U$ independent of everything else and accept the
proposal if
$$U\leq\frac{1}{1/P(T_{0}<\infty)}\times\frac{dP^{\ast}}{dP_{\eta}}(\tilde
{S}_{1},...,\tilde{S}_{T_{0}})=\exp(-\eta\tilde{S}_{T_{0}}).$$
This criterion coincides with $J\left(  0\right)  $ according to
(\ref{ConRep}). So, the procedure above
simultaneously obtains both a Bernoulli r.v. $J\left(  0\right)  $ with
parameter $q\left(  0\right)  $, and the corresponding path $\{\tilde{S}%
_{1},...,\tilde{S}_{T_{0}}\}$ conditional on $T_{0}<\infty$.\newline

\noindent\textbf{Algorithm 1 (Outputs $(\tilde{S}_{0},...,\tilde{S}_{\Delta_{\gamma}})$)}

\begin{itemize}
\item[Step 0.] Set $K=0$, and $S_{0}=0$

\item[Step 1.] Simulate $(\tilde{S}_{1},...,\tilde{S}_{T_{0}})$ from $P_{\eta
}$ and compute $J:=J\left(  0\right)  $ according to (\ref{ConRep}).

\item[Step 2.] If $J=1$, then let $S_{K+j}=\tilde{S}_{j}$ for $j=1,...,T_{0}$
and update $K\longleftarrow K+T_{0}$. Then, go back to Step 1.\newline Otherwise,
$J=0$ (i.e. $\Delta_{\gamma}=K$), stop and output $\left(  S_{0},...,S_{K}\right)$
\end{itemize}

\noindent{\bf Remark:} It has been proved in  \cite{BlanDon:2012} that the expected number of times we need to repeat Step 1 does not change with the system scale (i.e. the arrival rate).

We noted earlier that $\Delta_{\gamma}=\kappa\left(  A\right)  $ and Algorithm
1 together with the initial procedure to sample $A_{1}$ allows us to simulate
$\left(  A_{j+1}:0\leq j\leq\kappa\left(  A\right)  \right)  $, and we know
that $A_{n+1}\geq n(\mu-\epsilon)$ for $n\geq\kappa\left(  A\right)  $. We
need to simulate $A_{n+1}$ for $n\leq\kappa=\max\{\kappa\left(  A\right)
,\kappa\left(  V\right)  \}$, and $\kappa\left(  V\right)  $ is independent of
$\kappa\left(  A\right)  $. So, there might be cases for which we will have to
sample $A_{n+1}$ for $n>\kappa\left(  A\right)  $. Since\ $A_{n+1}%
=A_1-\tilde S_n+n(\mu-\epsilon)$ it suffices to explain how to simulate
$\tilde{S}_{n}$ for $n>\Delta_{\gamma} $. In turn, it suffices to
explain how to simulate $(\tilde{S}_{n}:n\geq0)$ with $\tilde{S}_{0}=0$
conditional on $T_{0}=\infty$. We will once again apply an acceptance / rejection procedure
but this time we will use the original (nominal) distribution as the proposal
distribution. Define%
\[
P^{\prime}\left(  \cdot\right)  =P(\cdot|T_{0}=\infty).
\]
The following result provides an expression for the likelihood ratio between
$P^{\prime}$ and $P$.

\begin{lemma}
We have that
\[
\frac{dP^{\prime}}{dP}(\tilde{S}_{1},...,\tilde{S}_{l})=\frac{I(T_{0}%
>l)(1-q(-\tilde{S}_{l}))}{P(T_{0}=\infty)}\leq\frac{1}{P(T_{0}=\infty)}.%
\]

\end{lemma}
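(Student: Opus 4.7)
The plan is to decompose the event $\{T_0=\infty\}$ into the past up to time $l$ and an independent future. More precisely, I would observe that
\[
\{T_0=\infty\} \;=\; \{T_0>l\}\cap\{\tilde{S}_{l+k}\leq 0\ \text{for all}\ k\geq 1\},
\]
since the walk must not exceed $0$ at any of the first $l$ steps and must not exceed $0$ at any later step. On $\{T_0>l\}$ we have $\tilde{S}_l\leq 0$, so $-\tilde{S}_l\geq 0$ and the quantity $q(-\tilde{S}_l)$ is well defined.

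Next I would apply the Markov property at the deterministic time $l$. Given $\mathcal{F}_l$, the increments $\{\tilde{S}_{l+k}-\tilde{S}_l:k\geq 1\}$ form a fresh copy of the nominal random walk, independent of $\mathcal{F}_l$. Hence the conditional probability that this future walk never brings $\tilde{S}_{l+k}$ above $0$ equals the probability that a walk started at $0$ never exceeds $-\tilde{S}_l$, namely
\[
P\bigl(\tilde{S}_{l+k}\leq 0\ \forall k\geq 1 \,\big|\, \mathcal{F}_l\bigr)
\;=\; 1-q(-\tilde{S}_l).
\]

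Combining these two observations, for any measurable sets $H_1,\dots,H_l$,
\begin{align*}
P(\tilde{S}_1\in H_1,\dots,\tilde{S}_l\in H_l,\,T_0=\infty)
&=E\!\left[I(\tilde{S}_1\in H_1,\dots,\tilde{S}_l\in H_l,\,T_0>l)\bigl(1-q(-\tilde{S}_l)\bigr)\right].
\end{align*}
Dividing both sides by $P(T_0=\infty)$ and using the definition $P'(\cdot)=P(\cdot\mid T_0=\infty)$ yields the claimed Radon--Nikodym derivative. The uniform bound is immediate since $I(T_0>l)\bigl(1-q(-\tilde{S}_l)\bigr)\leq 1$.

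There is no real obstacle here; the only subtlety is justifying that $q(-\tilde{S}_l)$ makes sense, which is handled by noting it appears multiplied by $I(T_0>l)$, forcing $\tilde{S}_l\leq 0$, and invoking translation invariance of the walk's increments to reduce the future event to the level-crossing probability $q$.
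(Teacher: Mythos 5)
Your proof is correct and follows essentially the same route as the paper's: decompose $\{T_0=\infty\}$ into $\{T_0>l\}$ intersected with the event that the post-$l$ walk never exceeds level $0$, then identify the conditional probability of the latter as $1-q(-\tilde{S}_l)$ via the Markov property and translation invariance of the increments. Your write-up is simply a more detailed version of the paper's two-line computation, including the useful observation that $I(T_0>l)$ guarantees $-\tilde{S}_l\geq 0$ so that $q(-\tilde{S}_l)$ is well defined.
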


\begin{proof}
\begin{eqnarray*}
P(\tilde S_1 \in H_1, .... , \tilde S_l \in H_l | T_0 = \infty) &=& \frac{P(\tilde S_1 \in H_1, ... \tilde S_l \in H_l, T_0 = \infty)}{P(T_0 = \infty)}\\
&=& \frac{E[I(\tilde S_1 \in H_1, ... , \tilde S_l \in H_l) I(T_0 > l) P(T_0=\infty | \tilde S_0,...,\tilde S_l) ]}{P(T_0=\infty)}.
\end{eqnarray*}
The result then follows from the strong Markov property and homogeneity of the random walk.
\end{proof}

We are in good shape now to apply acceptance / rejection to sample from
$P^{\prime}$. The previous lemma indicates that to sample $\{\tilde{S}%
_{0},...,\tilde{S}_{l}\}$ given $T_{0}=\infty$ we can propose from the original
(nominal)\ distribution and accept with probability $q(-\tilde{S}_{l})$ as
long as $\tilde{S}_{j}\leq0$ for all $0\leq j\leq l$. So, in order to perform
the acceptance test we need to sample a Bernoulli with parameter $q(-\tilde
{S}_{l})$, but this is easily done using identity (\ref{ConRep}). Thus we
obtain the following procedure.\newline

\noindent\textbf{Algorithm 2 (Given $n\geq0$ outputs $\{A_{1},A_{2},...,A_{\max\{n,\kappa( A)\}+1}\}$)}

\begin{itemize}
\item[Step 1.] Run Algorithm 1 and obtain $\{S_{0},S_{1},...,S_{K}\}$.

\item[Step 2.] If $K=\kappa\left(  A\right)  \geq n$, jump to Step 6. Otherwise, $K<n$, let $l=n-K\geq1$.

\item[Step 3.] Simulate $\{\tilde{S}_{0},\tilde{S}_{1},...,\tilde{S}_{l}\}$ from
the original (nominal) distribution with $\tilde S_0=0$.

\item[Step 4.] If $\tilde{S}_{j}\leq0$ for all $0\leq j\leq l$ then sample a
Bernoulli $J(-\tilde{S}_{l})$ with parameter $q(-\tilde{S}_{l})$ using
(\ref{ConRep}) and continue to Step 5. Otherwise (i.e. $\tilde{S}_{j}>0$ for
some $1\leq j\leq l$) go back to Step 3.

\item[Step 5.] If $J(-\tilde{S}_{l})=1$, go back to Step 3. Otherwise,
$J(-\tilde{S}_{l})=0$, let $S_{K+i}=S_K+\tilde{S}_{i}$ for $i=1,2,...,l$

\item[Step 6.] Let $m=\max\{n,\kappa(A)\}$. Simulate $A_{1}$ with CDF
$G_{eq}(\cdot)=\mu^{-1}\int_{\cdot}^{\infty}\bar{G}(t)dt$. Set $A_{n+1}=A_1-S_n + n(\mu-\epsilon)$ for $n=1,...,m$.
Output $\{A_1,..., A_{m+1}\}$.

\end{itemize}

\subsection{Simulation of $\{ V_{n}: 1 \leq n \leq\kappa(V)+1\}$}

In this section we will introduce a method to simulate $\kappa(V)$ together
with the $\{V_{n}: n \geq 1\}$.

Let $p(n)=P(|V_{1}|>(n(\mu-\epsilon))^{\alpha})$. We define $\Upsilon_{0}=0$
and $\Upsilon_{i}=\inf\{n>\Upsilon_{i-1}:|V_{n+1}|>(n(\mu-\epsilon))^{\alpha
}\}$ for $i=1,2,...$. We also define two independent sequences of random
variables, $\{\hat{V}_{n+1}:n\geq1\}$, and $\{\bar{V}_{n+1}:n\geq1\}$ as follows.
The elements in each sequence are i.i.d., $\hat{V}_{n+1}$ is distributed as
$V_{n+1}$ conditional on $|V_{n+1}|>(n(\mu-\epsilon))^{\alpha}$, and $\bar{V}_{n+1}$
follows the distribution of $V_{n+1}$ conditional on $|V_{n+1}|\leq(n(\mu
-\epsilon))^{\alpha}$. We simulate $V_{1}$ following its nominal distribution
independent of everything else.

Let $\sigma=\inf\{i\geq0:\Upsilon_{i}=\infty\}$. Then $V_{n+1}\leq
(n(\mu-\epsilon))^{\alpha}$ for $n\geq\Upsilon_{\sigma-1}+1$. We next introduce a method to sample
$\Upsilon_{1},\Upsilon_{2},...$ sequentially and jointly with the $V_{n}$'s up
until $\Upsilon_{\sigma-1}$.

The following lemma provides the basis to guarantee the termination of our procedure.

\begin{lemma}
If $E|V_{1}|^{1/\alpha}<\infty$, then
\[
P(\Upsilon_{1}=\infty)=\prod_{i=1}^{\infty}(1-p(i))\geq\exp(-
2E|V_{1}|^{1/\alpha}/(\mu-\epsilon))>0,
\]
consequently $E\sigma\leq\exp(2E|V|^{1/\alpha}/(\mu-\epsilon))<\infty$.
\end{lemma}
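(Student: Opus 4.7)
The plan is to first establish the product representation by exploiting independence of the $V_n$'s, then lower-bound this infinite product by combining a termwise logarithmic inequality with a Riemann-sum estimate of $\sum p(n)$, and finally deduce the bound on $E\sigma$ through a geometric domination argument. For the product formula, I would observe that
\[
\{\Upsilon_1 = \infty\} = \bigcap_{n \geq 1}\{|V_{n+1}| \leq (n(\mu-\epsilon))^{\alpha}\};
\]
the events on the right involve distinct $V_{n+1}$'s and are therefore mutually independent, so the probability factors as $\prod_{n=1}^{\infty}(1-p(n))$.

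For the lower bound, I would take logarithms and apply the elementary inequality $\log(1-x) \geq -2x$, which is valid on $[0,1/2]$. Since $p(n)$ is nonincreasing and $p(n)\to 0$ (as the sum estimate below will confirm), all but finitely many terms fall in this range, and the remaining initial segment can be absorbed into the slack in the constant. Converting the sum to an integral by monotonicity of the tail,
\[
\sum_{n=1}^{\infty} p(n) = \sum_{n=1}^{\infty} P\bigl(|V_1|^{1/\alpha} > n(\mu-\epsilon)\bigr) \leq \frac{1}{\mu-\epsilon}\int_0^{\infty} P\bigl(|V_1|^{1/\alpha} > t\bigr)\,dt = \frac{E|V_1|^{1/\alpha}}{\mu-\epsilon},
\]
and combining with the logarithmic bound gives $\log P(\Upsilon_1=\infty) \geq -2E|V_1|^{1/\alpha}/(\mu-\epsilon)$, from which the stated lower bound and the strict positivity both follow.

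For the bound on $E\sigma$ I would argue by geometric domination. Conditioning on $\Upsilon_{i-1} = m < \infty$, the residual variables $\{V_{m+2}, V_{m+3},\dots\}$ are independent of the past, and so
\[
P(\Upsilon_i = \infty \mid \Upsilon_{i-1} = m) = \prod_{n \geq m+1}(1-p(n)) \geq \prod_{n \geq 1}(1-p(n)) = q,
\]
since the dropped factors all lie in $[0,1]$. Therefore $P(\sigma > k) \leq (1-q)^k$, and summing over $k$ yields $E\sigma \leq 1/q \leq \exp(2E|V_1|^{1/\alpha}/(\mu-\epsilon))$. The main obstacle is that the termwise inequality $\log(1-x) \geq -2x$ fails near $x=1$, so any indices with $p(n) > 1/2$ must be handled by a careful split of the sum or by a slightly weaker local bound; this does not affect the qualitative conclusion but is the step where the writing needs to be precise.
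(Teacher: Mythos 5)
Your proof follows essentially the same route as the paper's: the product representation from independence of the $V_n$'s, the termwise bound $1-p(n)\geq e^{-2p(n)}$, the monotone comparison $\sum_{n\geq 1}p(n)\leq \frac{1}{\mu-\epsilon}\int_0^\infty P(|V_1|^{1/\alpha}>t)\,dt$, and geometric domination of $\sigma$ (your observation that the conditional tail product only increases when initial factors are dropped is a marginally cleaner way to get the same geometric parameter). The caveat you raise about indices with $p(n)$ near $1$ is genuine, but it afflicts the paper's own proof equally --- the paper applies $1-x\geq e^{-2x}$, which holds only for $x\lesssim 0.797$, without comment, and since the constant $2$ in the stated bound leaves no slack, neither argument fully covers that case; modulo that shared implicit assumption, your proof is complete.
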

\noindent{\bf Remark:} The bound on $E\sigma$ can be improved. This improvement is important for the theoretical asymptotic analysis of GI/GI/$\infty$ application, see \cite{BlanDon:2012}.
\begin{proof}
\begin{eqnarray*}
P(\Upsilon_1=\infty) = \prod_{n=1}^{\infty}(1-p(n)) &\geq& \prod_{n=1}^{\infty} \exp(-2p(n))\\
&\geq& \exp(-\frac{2}{\mu-\epsilon}\int_{0}^{\infty} P(|V_1|^{1/\alpha}>\nu)d\nu) = \exp(-\frac{2E|V_1|^{1/\alpha}}{\mu-\epsilon})
\end{eqnarray*}
For $i=2,3,...$
conditional on $\Upsilon(i-1)=k$:\newline%
\begin{equation*}
P(\Upsilon_{i}=\infty|\Upsilon_{i-1}=k)=\prod_{n=k+1}^{\infty}(1-p(n))\geq
\exp(-\frac{2\int_{k}^{\infty} P(|V_1|^{1/\alpha}>\nu)d\nu}{\mu-\epsilon}\geq
\exp(-\frac{2E|V_1|^{1/\alpha}}{\mu-\epsilon})%
\end{equation*}
Thus $\sigma$ is stochastically dominated by a geometric random variable with parameter $p=\exp(-2E|V_1|^{1/\alpha}/(\mu-\epsilon))$, the result then follows.

\end{proof}

Notice that
\begin{equation}
\prod_{i=k+1}^{l}(1-p(i))\geq P(\Upsilon_{i}=\infty | \Upsilon_{i-1}=k)\geq\prod_{i=k+1}%
^{l}(1-p(i))\times\exp(-\frac{2\int_{l}^{\infty} P(|V_1|^{1/\alpha}>\nu)d\nu}{\mu-\epsilon})\label{EG}
\end{equation}
for $l \geq k+1$.\\
Thus if we are simulating $I\sim$ Bernoulli$(r_{i})$ with $r_{i}%
:=P(\Upsilon_{i}=\infty|\Upsilon_{i-1})$, then with probability one we can check whether
$U\leq P(\Upsilon_{i}=\infty|\Upsilon_{i-1})$ for $U\sim$ Unif$[0,1]$ by making $l$
sufficiently large without calculating the infinite product in the definition
of $P(\Upsilon_{i}=\infty|\Upsilon_{i-1})$.

On the other hand, if we define $\prod_{j=1}^{0}(1-p(j)):=1$, then
\[
P(\Upsilon_{1}=n|\Upsilon_{1}<\infty)=p(n)\frac{\prod_{j=1}^{n-1}%
(1-p(j))}{P(\Upsilon_{1}<\infty)}\leq p(n)\frac{1}{P(\Upsilon_{1}<\infty)}.%
\]
Consider a random variable $N$ with the following probability density
function
\[
P(N=n)=cp(n)
\]
for $n=1,2,...$, where $c=(\sum_{n=1}^{\infty}p(n))^{-1}$.
Then
$
P(\Upsilon_{1}=n|\Upsilon_{1}<\infty)/P(N=n) \leq 1/(cP(\Upsilon
_{1}<\infty)).%
$

So we can simulate $\Upsilon_1$ given $\Upsilon_1<\infty$ using acceptance / rejection with $N$ as the proposal random variable. Generalizing the idea to $\Upsilon_{i}$, we can obtain the following algorithm \newline

\noindent\textbf{Algorithm 3 (Given $\Upsilon_{i-1}=k$, outputs $\Upsilon_{i}$ conditional on $\Upsilon_{i}<\infty$)}
\begin{itemize}
\item[Step 1.] Let $c=(\sum_{n=k+1}^{\infty} p(n))^{-1}$. Simulate $N$ with probability density function $P(N=n)=cp(n)$
for $n=k+1,k+2,...$

\item[Step 2.] Simulate $U\sim$ Unif$[0,1]$ independently. If $U\leq
\prod_{j=k+1}^{N-1}(1-p(j))$ , set $\Upsilon_{i}=N$ and stop. Otherwise go back to Step 1
\end{itemize}

We conclude this section with our procedure to simulate $\{V_{1},V_{2},...V_{\kappa(V)+1}\}$.\\

\noindent\textbf{Algorithm 4 (Outputs $\{V_{1},V_{2},...V_{\kappa(V)+1}\}$)}

\begin{itemize}
\item[Step 0.] Set $\Upsilon_{0}= 0$, $i=1$. Simulate $V_1$ from its nominal distribution.

\item[Step 1.] Simulate $I\sim$ Bernoulli$(r_{i})$ with $r_{i}:=P(\Upsilon
_{i}=\infty|\Upsilon_{i-1})$ (see (\ref{EG})).

\item[Step 2.] If $I=1$, set $\kappa(V)=\Upsilon_{i-1}+1$. Simulate
$V_{\kappa(V)+1}$ by sampling from $\bar{V}_{\kappa(V)+1}$ and stop. Otherwise
$I=0$, sample $\Upsilon_{i}$ conditional on $\Upsilon_{i}<\infty$ and the value of $\Upsilon_{i-1}$ using
Algorithm 3. Simulate the process between $\Upsilon_{i-1}+2$ and $\Upsilon_{i}+1$
by sampling from $\bar{V}_{n}$ for $\Upsilon_{i-1}+2 \leq  n \leq \Upsilon_{i}$ and
$\hat{V}_{n}$ for $n=\Upsilon_{i}+1$. Set $i=i+1$ and then go back to Step 1.
\end{itemize}

\section{Application to the infinite server queue} \label{sec:inf}

As a direct application of the ideas discussed in the previous section we
study steady-state simulation for the infinite server queue. The following
diagram indicates how to construct the steady-state measure valued descriptor
assuming that we can sample all the points inside the set
\[
\mathcal{C}=\{(t,v):v\geq\left\vert t\right\vert ,t\leq0\}.
\]
Let $Q(t,y)$ denote the number of people in the system at time $t$ with
residual service time strictly greater than $y$ and $E(t)$ denote the time
elapsed since the previous arrival at time $t$ (i.e. $E\left(  \cdot\right)  $
is the age process associated with $N\left(  \cdot\right)  $). Figure~\ref{fig: rub} below depicts the region $\mathcal{C}$. Every point in $\left\vert
\mathcal{M}\cap\mathcal{C}\right\vert $ is projected to the vertical line at
time zero by drawing a $-$45$^{0}$ line. The final position in the
vertical line if positive, represents the corresponding remaining service time. Since the
underlying point process is time stationary, the whole configuration of points
obtained by this procedure at time zero is a snap shot of the steady-state
distribution of the infinite server queue.

\begin{figure}[htb]
{
\centering
\includegraphics[width=0.3\textwidth]{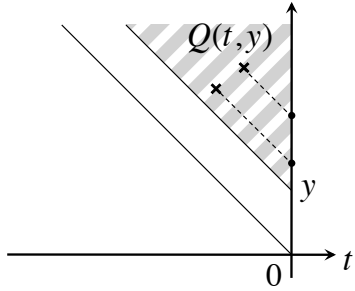}
\caption{The points lies in the shaded area correspond to people who are still in the system at time $0$ with remaining service time greater than $y$}
\label{fig: rub}}
\end{figure}

\subsection{Algorithm for the Infinite Server Queue}
As depicted in Figure~\ref{fig: rub} after projecting into the vertical line at $t=0$, we obtain the stationary remaining service requirements of the customers at time zero. We shall use $R_1, R_2, ..., R_{Q(0,0)}$ to denote the remaining service times. The labeling is arbitrary although we will assign smaller indexes to customers that have spent less time in the system. Our algorithm proceeds as follows.\\

\noindent\textbf{Algorithm 5 (Outputs $\{R_1, R_2, ... ,R_{Q(0,0)}\}$ and $E(0)$)}

\begin{itemize}

\item[Step 1.] Use Algorithm 4 to simulate the $\{V_{n}, 1 \leq n \leq
\kappa(V)+1\}$.

\item[Step 2.] Use Algorithm 2 to simulate the $\{A_{1},A_{2},...,A_{\max\{\kappa(V),\kappa(A)\}+1}\}$.

\item[Step 3.] Set $\kappa=\max(\kappa(V),\kappa(A))$. If $\kappa>\kappa(V)$, simulate $V_{n}$ by sampling from $\bar{V}_{n}$ for
$n=\kappa(V)+2,...,\kappa+1$.

\item[Step 4.] Set $q=0$, $i=0$ and repeat the following procedure until $i=\kappa$:\\
{\it set $i=i+1$;  if $V_i > A_i$, set $q=q+1$ and $R_q=V_i-A_i$}.\\
Output $\{R_1,R_2,...R_q\}$ and $A_1$.
\end{itemize}

\subsection{Empirical Performance}

Let $Y=\{Y(t):t\geq0\}$ be a continuous time Markov process on the state space
$\Omega$ and $f$ is a real-valued function defined on $\Omega$. The ergodic theorem
guarantees in great generality (assuming a unique stationary distribution
$\pi\left(  \cdot\right)  $) that%
\[
\frac{1}{t}\int_{0}^{t}f(Y(s))ds\rightarrow\int_{\Omega}f(y)\pi
(dy)
\]
as $t\rightarrow\infty$ almost surely for every positive, measurable function
$f\left(  \cdot\right)  $. In the setting of the infinite server queue such a
stationary distribution exists if $EV_{n}<\infty$ and $EX_{n}<\infty$. The
most natural estimator for $E_{\pi}f(Y):=\int_{\Omega}f(y)\pi(dy)$ is therefore
\[
\Phi(t,Y(0)):=\frac{1}{t}\int_{0}^{t}f(Y(s))ds,
\]
where $Y(0)$ is the initial state. The estimator $\Phi\left(  t, Y(0) \right)  $ is generally biased unless
$Y\left(  0\right)  $ is sampled from the stationary distribution $\pi\left(\cdot\right)$ (\cite{AsmGlyn:2007} P. 97).
Our algorithm has the obvious advantage of removing the initial transient.

In what follows we conduct some simulation experiment to evaluate the
practical performance of our algorithm. The idea is to fix a reasonable
tolerance error, say 10\%, for a given performance measure. Then we want to
empirically find how large a burn-in period one would need in practice to
reduce the initial transient bias to about 10\%. In order to effectively
quantify the error we select a class of systems for which $\pi\left(
\cdot\right)  $ can be explicitly evaluated.

We consider an infinite server queue with Poisson arrivals and Lognormal
service times. As we are interested in the efficiency of our algorithm for
relatively large systems, we set the arrival rate $\lambda=100$ and the
service time $V_{n}\sim$ Lognormal$(-0.25,0.5)$ (i.e. $V_{n}$ has the same
distribution as $\exp\left(  -.25+.5\times N(0,1)\right)  $, where $N\left(
0,1\right)  $ denotes a standard Gaussian random variable).

Let $Y(t)=(Q(t,\cdot),E(t))\in\mathcal{D}[0,\infty)\times \mathbb{R}_{+}$, then $Y(t)$
is a Markovian measure valued descriptor of the infinite server queue (of
course in the Poisson arrival case one does not need to keep track of
$A\left(  \cdot\right)  $).

We first compare the performance of our algorithm to the burn-in period
defined as the period needed to reduce the initial transient as indicated
earlier. Let $f(Y(t))=Q(t,0)$, i.e. the number of people in the system at time
$t$. We measure the computation effort of the algorithm in terms of the number
of arrivals (we call this the number of steps) simulated. Given $\epsilon>0$
we let $n(\epsilon)$ denote the minimum number of steps required so that
$|E\Phi(A_{n(\epsilon)},(\phi,0))-E_{\pi}Q(0,0)|/E_{\pi} Q(0,0) \leq\epsilon$,
where $(\phi,0)$ denotes a system that starts empty with $E(0)=0$ (recall that $E(\cdot)$ is the age process associated with $N(\cdot)$, i.e. when $E(0)=x$,  $A_1$ is distributed as $X_n$ conditional on $X_n>x$).
Table~\ref{tab: first} shows the relation between $\epsilon$
and $n(\epsilon)$, obtained empirically based on the average of $10^{4}$
independent replications

\begin{table}[htb]
\centering
\caption{Bias of $\Phi(S_{n(\epsilon)})$ \label{tab: first}}
\begin{tabular}{c|c|c} \hline
$\epsilon$ & $n(\epsilon)$ & computer time (s)\\ \hline
$10.26\%$ & $6 \times 10^2$ & $0.0310$\\ \hline
$5.71\%$ & $1 \times 10^3$ & $0.0382$\\ \hline
$1.17\%$ & $5 \times 10^3$ & $0.1367$\\ \hline
\end{tabular}
\end{table}

Compared to the results in Table~\ref{tab: first}, our algorithm is unbiased. The average
number of steps involved is $n=592.6369$  based on the average of $10^{4}$ independent
replications  and the average computer time needed for a single
replication is $0.0249$ s.

In addition, in Table~\ref{tab: second} we compare the performance of the estimators  $\Phi(A_n, (\phi,0))$ and $\Phi(A_{n'}, (Q(0,\cdot),A_1))$,
where $Q(0,\cdot)$ and $A_1$ are sampled according to Algorithm 5.
 $n$ and $n'$ are calibrated so that the computation budget
is basically the same in both estimators. Under our procedure, $E\kappa$, the average number of arrivals required to terminate is approximately equal
to $600$. So for instance, the first row in Table 2 corresponds to $n=10^4$. This means that $n' \approx 9.4\times10^3=10^4-600$.
The true value of $E_{\pi}Q(0,0)$ is $88.2497$. The sample mean and sample standard deviation are calculated using the method of Batch means.
The result in Table~\ref{tab: second} shows that our mixed method performs better than the batch means with relatively small
computation budget, while with large budget, the two methods are about the same.

\begin{table}[htb]
\centering
\caption{Simulation result with different initial states. \label{tab: second}}
\begin{tabular}{c|c|c|c|c}\hline
& \multicolumn{2}{|c|}{$(\phi,0)$} & \multicolumn {2}{|c}{$(Q(0,\cdot), A_1)$} \\ \hline
$n$&  Sample Mean & Sample Std & Sample Mean & Sample Std \\ \hline
$1 \times 10^4$ & $86.1274$  & $1.0104$  & $88.1713$  & $0.6018$  \\ \hline
$5 \times 10^4$ & $89.0893$  & $0.4587$  & $88.2956$  & $0.3770$  \\ \hline
$1 \times 10^5$ & $88.5151$  & $0.3531$  & $88.1270$  & $0.2976$  \\ \hline
$5 \times 10^5$ & $88.3022$  & $0.1481$  & $88.3581$  & $0.1402$  \\ \hline
\end{tabular}
\end{table}

\section{Application to sensitivity analysis of the infinite server queue} \label{sec:sens}
In this section, we apply our algorithm to sensitivity analysis of the
infinite server queue. We consider a sequence of systems indexed by
$(\lambda,\nu),$ $\lambda>0,$ $\nu>0$. Given $\left(  \lambda,\nu\right)  $,
the interarrival times are multiplied by $1/\lambda$, obtaining $X_{n}%
/\lambda$ for all $n$, and the service times are multiplied by $1/\nu$, thus
we have $V_{n}/\nu$ for all $n$. We assume that $EV_{n}<\infty$ and
$EX_{n}<\infty$. We will use the notation $Q_{\lambda,\nu}\left(
\cdot\right)  $ to denote the infinite server queue descriptor for the
$\left(  \lambda,\nu\right)  $-system. Our strategy rests on the application
of Infinitesimal Perturbation Analysis (IPA), see for instance \cite{Glass:2003} P. 386.
We assume here that the interarrival times have a continuous
distribution.

We illustrate the methodology by computing the sensitivity of
the steady-state average remaining service time, which we denote by $E_{\pi
}\bar{R}(\lambda,\nu)$; namely,%
\[
E_{\pi}\bar{R}(\lambda,\nu)=E_{\pi}\frac{1}{Q_{\lambda,\nu}\left(  0,0\right)
}\int_{0}^{\infty}yQ_{\lambda,\nu}\left(  0,dy\right)  .
\]
We also consider
\[
E_{\pi}R^{\infty}(\lambda,\nu)=E_{\pi}(\inf\{y\geq0:Q_{\lambda,\nu}\left(
0,y\right)  =0\}),
\]
in words, the steady-state maximum remaining service time. In order to apply
IPA we need to define a few quantities.

First, let us define $\bar{\Xi}(\lambda,\nu)$ to be the average elapsed
service time of the customers that are present at time zero (given the
construction of the stationary process $\{  Q_{\lambda,\nu}\left(
t,\cdot\right)  :t\in\left(  -\infty,\infty\right)  \}  $, see Figure
2). That is,
\[
\bar{\Xi}(\lambda,\nu)=\frac{1}{Q_{\lambda,\nu}\left(  0,0\right)  }%
\sum_{n=-1}^{-\infty}\frac{\left\vert A_{n}\right\vert }{\lambda}I\left(
\frac{\left\vert A_{n}\right\vert }{\lambda}<\frac{V_{n}}{\nu}\right)
\]
Likewise, define $\bar{V}(\lambda,\nu)$ as the average of the total service
requirement of the customers that are present at time zero, namely
\[
\bar{V}(\lambda,\nu)=\frac{1}{Q_{\lambda,\nu}\left(  0,0\right)  }\sum
_{n=-1}^{-\infty}\frac{V_{n}}{v}I\left(  \frac{\left\vert A_{n}\right\vert}
{\lambda}<\frac{V_{n}}{\nu}\right)  .
\]
Next, we define $\Xi^{(\infty)}\left(  \lambda,\nu\right) $ as the elapsed service time of the customer with
the maximum remaining service time at time zero and $V^{(\infty)}(\lambda,\nu)$ as his total service time requirement. Specifically, if we let
$m=\arg\max\{n: V_n/\nu-|A_n|/\lambda \}$
then
$$\Xi^{(\infty)}\left(  \lambda,\nu\right)=\frac{|A_m|}{\lambda} \mbox{ and } V^{(\infty)}(\lambda,\nu)=\frac{V_m}{\nu}$$

We then obtain the following representation for the derivatives of $E_{\pi
}\bar{R}(\lambda,\nu)$ and $E_{\pi} R^{\infty}(\lambda,\nu)$ with respect to $\lambda$ and $\nu$.

\begin{lemma}
We have that\\
i)
\[
\frac{\partial}{\partial\lambda}E_{\pi}\bar{R}(\lambda,\nu)=\frac{1}%
{\lambda}E_{\pi}\bar{\Xi}(\lambda,\nu)
\mbox{ and }
\frac{\partial}{\partial\nu}E_{\pi}\bar{R}(\lambda,\nu)=-\frac{1}{\nu%
}E_{\pi}\bar{V}(\lambda,\nu).
\]
ii)
\[
\frac{\partial}{\partial\lambda} E_{\pi}R^{\infty}(\lambda,\nu)=\frac
{1}{\lambda}E_{\pi}\Xi^{(\infty)}(\lambda,\nu)
\mbox{ and  }
\frac{\partial}{\partial\nu} E_{\pi}R^{\infty}(\lambda,\nu)=-\frac{1}{\nu%
}E_{\pi}V^{(\infty)} (\lambda,\nu)
\]
\end{lemma}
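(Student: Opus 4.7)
The plan is to apply Infinitesimal Perturbation Analysis. The pivotal identities are the pathwise decompositions
\[
\bar{R}(\lambda,\nu) = \bar{V}(\lambda,\nu) - \bar{\Xi}(\lambda,\nu), \qquad R^{\infty}(\lambda,\nu) = V^{(\infty)}(\lambda,\nu) - \Xi^{(\infty)}(\lambda,\nu),
\]
which reduce the task to differentiating the four building blocks pathwise and then interchanging expectation and derivative via dominated convergence.

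First I would use the continuity of the interarrival distribution to argue that $P(V_n/\nu = |A_n|/\lambda)=0$ for each $n$, and that the differences $V_j/\nu - |A_j|/\lambda$ are a.s.\ distinct across the (finitely many, by stability) customers present at time zero. Consequently, with probability one there is a random neighborhood of $(\lambda,\nu)$ on which the indicators $I(|A_n|/\lambda<V_n/\nu)$ and the argmax $m$ defining $V^{(\infty)},\Xi^{(\infty)}$ are constant. On such a neighborhood $Q_{\lambda,\nu}(0,0)$ is constant and each building block is a smooth rational function of $\lambda$ and $\nu$. Differentiating term-by-term using $\partial(|A_n|/\lambda)/\partial\lambda=-|A_n|/\lambda^2$ and $\partial(V_n/\nu)/\partial\nu=-V_n/\nu^2$ gives
\[
\frac{\partial \bar{\Xi}}{\partial \lambda} = -\frac{\bar{\Xi}}{\lambda}, \qquad \frac{\partial \bar{V}}{\partial \nu} = -\frac{\bar{V}}{\nu}, \qquad \frac{\partial \bar{\Xi}}{\partial \nu} = 0, \qquad \frac{\partial \bar{V}}{\partial \lambda} = 0,
\]
and the same identities with $\Xi^{(\infty)}, V^{(\infty)}$ in place of $\bar{\Xi},\bar{V}$. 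Combined with the two decompositions, these yield the pathwise versions of all four identities in the statement.

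The main obstacle is then to justify the interchange of expectation and differentiation. My plan is to bound the difference quotient in $\lambda$ pathwise by a constant multiple of $(\bar{V}(\lambda,\nu)+\bar{\Xi}(\lambda,\nu))/\lambda$ outside the exceptional event that an $h$-perturbation flips some indicator or changes the argmax, and to control that event by showing its probability is $O(h)$, using the fact that the distance from $V_n/\nu$ to $|A_n|/\lambda$ has a bounded density near zero by A4 together with the continuity of $F$. Since $E_\pi \bar{V}$ and $E_\pi\bar{\Xi}$ are finite under the standing assumptions $EV_n<\infty,\,EX_n<\infty$ (via standard Palm inversion applied to the stationary point process), dominated convergence then applies and yields part i). Part ii) follows by the identical argument restricted to the single customer attaining the maximum, using the a.s.\ uniqueness of the argmax to legitimize the pathwise derivative.
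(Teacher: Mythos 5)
Your route is the paper's route: pathwise (IPA) differentiation followed by an interchange of derivative and expectation, justified by dominated convergence, with the continuity of the interarrival distribution used to rule out ties. The paper differentiates $R_{n}(\lambda,\nu,\omega)=\left(V_{n}(\omega)/\nu-|A_{n}(\omega)|/\lambda\right)^{+}$ directly rather than passing through the decompositions $\bar R=\bar V-\bar\Xi$ and $R^{\infty}=V^{(\infty)}-\Xi^{(\infty)}$, but that difference is cosmetic. Note also that for part ii) your worry about the argmax is unnecessary: $R^{\infty}$ is a pointwise maximum of finitely many functions, each locally Lipschitz in $(\lambda,\nu)$ with an integrable Lipschitz constant (of order $\max_{n}V_{n}$), so the maximum is itself locally Lipschitz, a change of argmax produces a kink rather than a jump, and plain domination suffices with no $O(h)$-probability argument.

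The step that does not close is your justification of the interchange in part i). The quantity $\bar R$ is the ratio of $\sum_{n}R_{n}$ to $Q_{\lambda,\nu}(0,0)$, and the denominator jumps by $1$ each time an indicator flips, while the numerator is continuous there (the flipping customer enters or leaves with remaining service time $0$). Hence at a flip the path $\lambda\mapsto\bar R(\lambda,\nu,\omega)$ has a jump of size $-\bar R/(Q+1)$, which is of order one, so on the flip event the difference quotient is of order $1/h$. An event of probability $O(h)$ carrying a difference quotient of order $1/h$ contributes $O(1)$, not $o(1)$, to $E\left[(\bar R(\lambda+h,\nu)-\bar R(\lambda,\nu))/h\right]$; "probability $O(h)$ plus dominated convergence" therefore does not eliminate it, and you must either show this contribution vanishes in expectation or accept that it produces an additional term. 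The issue is not a removable technicality: in the $M/M/\infty$ case the remaining service times and ages of the customers present at time zero are i.i.d.\ exponential with rate $\nu$ given $Q$, so $E_{\pi}\bar R=\nu^{-1}P(Q\geq1)$, whose $\lambda$-derivative is $\nu^{-2}e^{-\lambda/\nu}$, whereas $\lambda^{-1}E_{\pi}\bar\Xi=(\lambda\nu)^{-1}(1-e^{-\lambda/\nu})$; these disagree, so the flip contribution is genuinely present. (The paper's own proof sidesteps the same point by dominating the difference quotients of the individual $R_{n}$'s, in effect holding $Q(0,0)$ fixed, so reproducing that argument would not repair your proof; the averaged functional $\bar R$, as opposed to the individual $R_{n}$ or the maximum $R^{\infty}$, is exactly where classical IPA needs extra care.)
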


\begin{proof}
We only give a proof of part i) here as the proof of part ii) is entirely analogous.\\
Let $R_n$ denote the remaining service time of the $n$th customer at time zero and $V_n$ as his total service time requirement, then $R_n \leq V_n$. Thus if $EV_n<\infty$, we have
$$E_{\pi}\bar R(\lambda,\nu)<\infty$$
for any $\lambda>0$,$\nu>0$.\\
For a fixed sample path $\omega$ constructed backward in time, let $R_{n}(\lambda,\nu, \omega)$, $n < 0$, denote the remaining service time of customer $n$ (counting backward in time) at time $0$ in system $(\lambda,\nu)$. Then $R_{n}(\lambda,\nu,\omega)=(V_{n}(\omega)/\nu-|A_{n}(\omega)|/\lambda)^+$ and
$$\lim_{h \rightarrow 0} \frac{R_n(\lambda+h,\nu,\omega)-R_n(\lambda,\nu,\omega)}{h}=\frac{|A_n(\omega)|}{\lambda^2} 1\{\frac{V_n(\omega)}{\nu} \geq \frac{|A_n(\omega)|}{\lambda}\}$$
$$\lim_{h \rightarrow 0} \frac{R_n(\lambda,\nu+h,\omega)-R_n(\lambda,\nu,\omega)}{h}=-\frac{V_n(\omega)}{\nu^2} 1\{\frac{V_n(\omega)}{\nu}\geq \frac{|A_n(\omega)|}{\lambda}\}$$
Thus the derivative $\frac{\partial}{\partial\lambda} \bar R(\lambda,\nu)$ and  $\frac{\partial}{\partial\nu} \bar R(\lambda,\nu)$ exists.\\
Let $\Xi_n$ denote the elapsed service time of the $n$th customer at time zeros and define $\Xi_n=V_n$ if he is no longer in the system at time zero, then $\Xi _n \leq V_n$. Therefore $E_\pi \frac{\partial}{\partial\lambda} \bar R(\lambda,\nu)<\infty$ and $E_\pi \frac{\partial}{\partial\nu} \bar R(\lambda,\nu)<\infty$.\\
As $|(\bar R_n(\lambda+h,\nu)-\bar R_n(\lambda,\nu))/h| \leq \max_{\kappa_{\lambda+h,\nu} < n <0} V_n/\lambda^2 $ and $|(\bar R_n(\lambda,\nu+h)-\bar R_n(\lambda,\nu))/h| \leq \max_{\kappa_{\lambda,\nu+h} < n <0} V_n/\nu^2$, by Lebesgue Dominated Convergence Theorem, we have
$$\frac{\partial}{\partial\lambda} E_\pi \bar R(\lambda,\nu)=E_\pi \frac{\partial}{\partial\lambda} \bar R(\lambda,\nu)
\mbox{ and }
\frac{\partial}{\partial\nu} E_\pi \bar R(\lambda,\nu)=E_\pi \frac{\partial}{\partial\nu} \bar R(\lambda,\nu)$$
As the interarrival times have a continuous distribution,  $P(V_n/\nu=|A_n|/\lambda)=0$ for $n < 0$.\\
Combining the change of limit and the sample path analysis we have
$$\frac{\partial}{\partial\lambda} E_\pi \bar R(\lambda,\nu)=\frac{1}{\lambda}E_\pi \bar \Xi(\lambda,\nu)
\mbox{ and  }
\frac{\partial}{\partial\nu} E_\pi \bar R(\lambda,\nu)=-\frac{1}{\nu}E_\pi \bar V(\lambda,\nu)$$
\end{proof}

Table 3 shows the simulated results of an infinite server queue with base
(i.e. $\lambda=1$) interarrival times distributed as Gamma$(2,2)$ and base
(i.e. $\nu=1$) service times distributed as Lognormal$(-0.25,0.5)$.

\begin{table}[htb]
\centering
\caption{Simulation result from exact sampling. }
\begin{tabular}{c|c|c|c|c}\hline
$(\lambda,\nu)$& $\frac{\partial}{\partial\lambda} E_\pi \bar R(\lambda,\nu)$ & $\frac{\partial}{\partial\nu} E_\pi \bar R(\lambda,\nu)$ & $\frac{\partial}{\partial\lambda} E_\pi R^{\infty}(\lambda,\nu)$ & $\frac{\partial}{\partial\nu} E_\pi R^{\infty}(\lambda,\nu)$ \\ \hline
$(80,1)$ & $7.0741 \times 10^{-3}$  & $-1.1320$  & $6.1022 \times 10^{-3}$  & $-2.8389$  \\ \hline
$(100,1)$ & $5.6470 \times10^{-3}$  & $-1.1316$  & $4.9379 \times 10^{-3}$  & $-2.9495$  \\ \hline
$(120,1)$ & $4.7236 \times 10^{-3}$  & $-1.1337$  & $4.2337 \times 10^{-3}$  & $-3.0684$  \\ \hline
\end{tabular}
\end{table}

\section*{ACKNOWLEDGMENTS}
Support from the NSF foundation through the grants CMMI-0846816 and CMMI-1069064 is gratefully acknowledged.

\bibliographystyle{plain}

\bibliography{exactBib}

\end{document}